\newtheorem{theorem}{Theorem}[section]
\newtheorem{remark}[theorem]{Remark}
\newtheorem{example}[theorem]{Example}
\definecolor{shadecolor}{gray}{0.85}
\newenvironment{proof}[1][\proofname]{\par
  \normalfont
  \topsep6\p@\@plus6\p@ \trivlist
  \item[\hskip\labelsep{\bfseries #1}\@addpunct{\bfseries}]\ignorespaces
}{
  \endtrivlist
}
\newcommand{\proofname}{\textit{Proof.}}
\newcommand{\ctext}[1]{\raise0.2ex\hbox{\textcircled{\scriptsize{#1}}}}
\def\qed{\hfill$\Box$}
\title{On the boundedness of the geodesic curvature measure of a regular curve passing through a cross cap singularity}
\author{Kyoya Hashibori}
\date{\today}
\begin{document}

\maketitle

\begin{abstract}
In 2015, Hasegawa, Honda, Naokawa, Saji, Umehara, and Yamada defined intrinsic cross cap singularities, which are generalizations of cross cap singularities, and proved the Gauss-Bonnet type formula for surfaces without boundary that admit these singularities. In this paper, we prove the boundedness of geodesic curvature measures of a regular curve passing through an intrinsic cross cap singularity and generalize the Gauss-Bonnet type formula by Hasegawa et al. to the case of surfaces with boundary. Also, we give conditions for the boundedness of the geodesic curvature at an intrinsic cross cap singularity.
\footnote[1]{\textit{Methematics Subject Classification (2020)}. Primary 57R45; Secondary 53A05}
\footnote[2]{\textit{Keywords}. Intrinsic cross cap singularity, Whitney metric, West-type coordinate system, geodesic curvature, Gauss-Bonnet type formula}
\end{abstract}

\section{Introduction}

A smooth map germ $f:\left(\mathbb{R}^2,\mathbf{0}\right)\to\left(\mathbb{R}^3,\mathbf{0}\right)$ is a \textit{cross cap} if $f$ is $\mathcal{A}$-equivalent to the map germ
\begin{equation}
f_0\left(u,v\right):=\left(u,uv,v^2\right)^T\label{1.1}
\end{equation}
at the origin $\mathbf{0}$, where $A^T$ is the transpose of $A$. Here, the two smooth map germs $f_i:\left(\mathbb{R}^2,\mathbf{0}\right)\to\left(\mathbb{R}^3,\mathbf{0}\right)\ \left(i=1,2\right)$ are \textit{$\mathcal{A}$-equivalent} at the origin $\mathbf{0}$ if there exist diffeomorphism germs $\varphi:\left(\mathbb{R}^2,\mathbf{0}\right)\to\left(\mathbb{R}^2,\mathbf{0}\right)$ and $\Phi:\left(\mathbb{R}^3,\mathbf{0}\right)\to\left(\mathbb{R}^3,\mathbf{0}\right)$ such that $f_2=\Phi\circ f_1\circ\varphi^{-1}$.

In \cite{1} West gave the following normal forms of cross caps in the Euclidean $3$-space $\mathbb{R}^3$: for a smooth map germ $f:\left(\mathbb{R}^2,\mathbf{0}\right)\to\left(\mathbb{R}^3,\mathbf{0}\right)$ with cross cap singularity at the origin $\mathbf{0}$, there exist an oriented local coordinate system $\left(u,v\right)$ centered at $\mathbf{0}$ and an orientation preserving isometry $T$ in $\mathbb{R}^3$ such that
\begin{eqnarray}
\widetilde{f}\left(u,v\right):=T\circ f\left(u,v\right)=\left(u,uv,\frac{a_{02}}{2}v^2+a_{11}uv+\frac{a_{20}}{2}u^2\right)^T+O\left(u,v\right)^3,\label{1.2}
\end{eqnarray}
where $a_{02}>0,a_{11},a_{20}$ are constants and $O\left(u,v\right)^3$ is the term whose degree is greater than or equal to $3$ with respect to $\left(u,v\right)$. These constants $a_{ij}$ are independent of the choice of coordinate systems in the domain and isometries in $\mathbb{R}^3$. Such an oriented coordinate system is called a \textit{canonical coordinate system}. It was then proved by Hasegawa, Honda, Naokawa, Umehara, and Yamada in \cite{3} that these constants $a_{ij}$ are intrinsic, i.e., $a_{ij}$ can be expressed only by the coefficients and their partial derivatives of the first fundamental form of $\widetilde{f}$. Furthermore, Hasegawa, Honda, Naokawa, Saji, Umehara, and Yamada defined positive semi-definite metrics called Whitney metrics in \cite{2} and reformulated these constants $a_{ij}$ as invariants of the isolated singular points of Whitney metrics.

Now, the classical Gauss-Bonnet theorem for a compact oriented surface $M$ with boundary asserts that the sum of the integral of the Gaussian curvature $K$ of $M$ and the integral of the geodesic curvature $\kappa_g$ of the boundary $\partial M$ of $M$ is equal to the Euler characteristic $\chi\left(M\right)$:
\begin{equation}
\int_MKdA+\int_{\partial M}\kappa_gds=2\pi\chi\left(M\right),\label{1.3}
\end{equation}
where $dA$ (resp. $ds$) is the area form on $M$ (resp. the arc-length measure of $\partial M$). In \cite{4}, Kuiper derived a Gauss-Bonnet type formula for surfaces without boundary that admit only cross cap singularities and showed that cross cap singularities do not affect the classical Gauss-Bonnet theorem. Then, as a generalization of Kuiper's formula, Hasegawa, Honda, Naokawa, Saji, Umehara, and Yamada, in \cite{2}, derived a Gauss-Bonnet type formula for a Whitney metric on surfaces without boundary and showed that singularities of a Whitney metric also do not affect the classical Gauss-Bonnet theorem.

In this paper, we prove the boundedness of the geodesic curvature of a regular curve passing through a singularity of a Whitney metric (see Theorem $\ref{thm3.1}$). We then generalize the Gauss-Bonnet type formula by Hasegawa et al. to the case of surfaces with boundary, and show that the singularities of a Whitney metric do not affect the formula $(\ref{1.3})$ (see Theorem $\ref{thm4.1}$).

This paper consists of the following: In section $\ref{sec2}$, we briefly review the theory of intrinsic cross cap singularities, which are singularities of a Whitney metric, and West-type coordinate systems according to \cite{2}. In section $\ref{sec3}$, we prove the boundedness of the geodesic curvature measure of a regular curve passing through an intrinsic cross cap singularity. To this purpose, we derive the representation of the geodesic curvature using the coefficients and their partial derivatives of a Whitney metric, and use the formula to mention conditions for the boundedness of the geodesic curvature at an intrinsic cross cap singularity (see $(\ref{3.5})$ and Remark $\ref{rem3.2}$). Finally, in section $\ref{sec4}$, we prove a Gauss-Bonnet type formula for Whitney metrics on surfaces with boundary.

\section{Preliminaries}\label{sec2}

In this section, we review intrinsic cross cap singularities and West-type coordinate systems according to \cite{2}.

Let $M$ be a $2$-manifold and $d\sigma^2:\mathfrak{X}\left(M\right)\times\mathfrak{X}\left(M\right)\to C^\infty\left(M\right)$ a positive semi-definite metrics on $M$, where $\mathfrak{X}\left(M\right)$ is the set of smooth vector fields on $M$ and $C^\infty\left(M\right)$ is the set of smooth real-valued functions on $M$. A point $p\in M$ is a \textit{singular point} of $d\sigma^2$ if $\left(d\sigma^2\right)_p$ is degenerate.

For the sake of brevity, we define
\begin{equation*}
\left\langle\mathbf{X},\mathbf{Y}\right\rangle:=d\sigma^2\left(\mathbf{X},\mathbf{Y}\right)\ \ \ \left(\mathbf{X},\mathbf{Y}\in\mathfrak{X}\left(M\right)\right).\label{2.1}
\end{equation*}
We define a smooth map $\Gamma:\mathfrak{X}\left(M\right)\times\mathfrak{X}\left(M\right)\times\mathfrak{X}\left(M\right)\to C^\infty\left(M\right)$ as
\begin{eqnarray*}
\Gamma\left(\mathbf{X},\mathbf{Y},\mathbf{Z}\right)&:=&\frac{1}{2}\left(\mathbf{X}\left\langle \mathbf{Y},\mathbf{Z}\right\rangle+\mathbf{Y}\left\langle\mathbf{Z},\mathbf{X}\right\rangle-\mathbf{Z}\left\langle\mathbf{X},\mathbf{Y}\right\rangle\right.\nonumber\\
&&\left.+\left\langle\left[\mathbf{X},\mathbf{Y}\right],\mathbf{Z}\right\rangle-\left\langle\left[\mathbf{Y},\mathbf{Z}\right],\mathbf{X}\right\rangle+\left\langle\left[\mathbf{Z},\mathbf{X}\right],\mathbf{Y}\right\rangle\right),\label{2.2}
\end{eqnarray*}
and call it a \textit{Kossowski pseudo-connection}. Then, for each $\mathbf{Y}\in\mathfrak{X}\left(M\right)$ and each $p\in M$, the map
\begin{equation*}
T_pM\times T_pM\to\mathbb{R},\ \left(\mathbf{v}_1,\mathbf{v}_2\right)\mapsto\Gamma\left(\mathbf{V}_1,\mathbf{Y},\mathbf{V}_2\right)\left(p\right)\label{2.3}
\end{equation*}
is a bilinear map, where $\mathbf{V}_j\ \left(j=1,2\right)$ are smooth vector fields on $M$ such that $\mathbf{v}_j=\left(\mathbf{V}_j\right)_p$.

For a point $p\in M$, we define the subspace $\mathcal{N}_p$ of $T_pM$ as
\begin{equation*}
\mathcal{N}_p:=\left\{\mathbf{v}\in T_pM\mid\forall\mathbf{w}\in T_pM,\ \left\langle\mathbf{v},\mathbf{w}\right\rangle=0\right\},\label{2.4}
\end{equation*}
which is called the \textit{null space} at $p$. A non-zero vector belonging to $\mathcal{N}_p$ is called a \textit{null vector} at $p$. Then, for a singular point $p\in M$ of $d\sigma^2$, the map
\begin{equation*}
\widehat{\Gamma}_p:T_pM\times T_pM\times\mathcal{N}_p\to\mathbb{R},\ \widehat{\Gamma}_p\left(\mathbf{v}_1,\mathbf{v}_2,\mathbf{v}_3\right):=\Gamma\left(\mathbf{V}_1,\mathbf{V}_2,\mathbf{V}_3\right)\left(p\right)\label{2.5}
\end{equation*}
is a trilinear map, where $\mathbf{V}_j\ \left(j=1,2,3\right)$ are smooth vector fields on $M$ satisfying $\mathbf{v}_j=\left(\mathbf{V}_j\right)_p$. A singular point $p\in M$ of $d\sigma^2$ is said to be \textit{admissible} if $\widehat{\Gamma}_p=0$. When all singular points of $d\sigma^2$ are admissible, $d\sigma^2$ is called an \textit{admissible metric}.

In the following, we suppose that $d\sigma^2$ is an admissible metric on $M$. Let $\left(U;u,v\right)$ be a local coordinate system centered at a singular point $p\in M$ of $d\sigma^2$ and define a smooth function $\lambda\in C^\infty\left(U\right)$ as
\begin{equation*}
\lambda:=EG-F^2\left(\geq0\right),\label{2.6}
\end{equation*}
where
\begin{equation*}
d\sigma^2=Edu\otimes du+Fdu\otimes dv+Fdv\otimes du+Gdv\otimes dv.\label{2.7}
\end{equation*}
When the Hessian of $\lambda$ does not vanish at $p$, that is,
\begin{eqnarray*}
H_\lambda\left(p\right):=\det\begin{pmatrix}\lambda_{uu}\left(p\right)&\lambda_{uv}\left(p\right)\\\lambda_{uv}\left(p\right)&\lambda_{vv}\left(p\right)\end{pmatrix}\neq0,\label{2.8}
\end{eqnarray*}
$p$ is called an \textit{intrinsic cross cap singularity} of $d\sigma^2$. We note that the definition of an intrinsic cross cap singularity $p$ is independent of the choice of local coordinates of $U$, $p$ is a minima of $\lambda$ and the dimension of $\mathcal{N}_p$ is $1$. If $d\sigma^2$ admits only intrinsic cross cap singularities, then $d\sigma^2$ is called a \textit{Whitney metric}.

Since the null space $\mathcal{N}_p$ at an intrinsic cross cap singularity $p\in M$ is $1$-dimensional, by applying an appropriate affine transformation, we can take a local coordinate system $\left(u,v\right)$ centered at $p$ such that $\left(\frac{\partial}{\partial v}\right)_p\in\mathcal{N}_p$. Such a local coordinate system is called an \textit{adjusted coordinate system}. Then the relation
\begin{equation}
H_\lambda\left(p\right)=4E\left(0,0\right)\Delta\left(0,0\right),\ \ \ \Delta:=\det\begin{pmatrix}E&F_u&F_v\\F_u&\frac{G_{uu}}{2}&\frac{G_{uv}}{2}\\F_v&\frac{G_{uv}}{2}&\frac{G_{vv}}{2}\end{pmatrix}\label{2.9}
\end{equation}
holds. Also,
\begin{equation*}
\alpha_{02}:=\frac{\sqrt{E\left(0,0\right)}\alpha^{\frac{3}{2}}}{\Delta\left(0,0\right)}\ \ \ \left(\alpha:=\frac{\delta_{vv}\left(0,0\right)}{2}\right)\label{2.10}
\end{equation*}
is a positive value independent of the choice of adjusted coordinate systems at $p$. Furthermore, there exist an adjusted coordinate system $\left(u,v\right)$ at an intrinsic cross cap singularity $p\in M$ and two constants $\alpha_{11},\alpha_{20}$ such that
\begin{eqnarray}
E\left(u,v\right)&=&1+\left(\alpha_{20}\right)^2u^2+2\alpha_{11}\alpha_{20}uv\nonumber\\
&&+\left(1+\left(\alpha_{11}\right)^2\right)v^2+O\left(u,v\right)^3,\label{2.11}\\
F\left(u,v\right)&=&\alpha_{11}\alpha_{20}u^2+\left(1+\left(\alpha_{11}\right)^2+\alpha_{02}\alpha_{20}\right)uv\nonumber\\
&&+\alpha_{02}\alpha_{11}v^2+O\left(u,v\right)^3,\label{2.12}\\
G\left(u,v\right)&=&\left(1+\left(\alpha_{11}\right)^2\right)u^2+2\alpha_{02}\alpha_{11}uv\nonumber\\
&&+\left(\alpha_{02}\right)^2v^2+O\left(u,v\right)^3.\label{2.13}
\end{eqnarray}
Such a coordinate system is called a \textit{West-type coordinate system of the second order}. We note that two constants $\left|\alpha_{11}\right|,\alpha_{20}$ are independent of the choice of West-type coordinate systems of the second order at $p$. Furthermore, $\alpha_{11}$ is independent of the choice of West-type coordinate systems of the second order compatible with the orientation of $M$.

\begin{example}[\cite{2}]
\label{ex2.1}

We check that a cross cap singularity $p\in M$ of a smooth map $f:M\to\mathbb{R}^3$ is an intrinsic cross cap singularity of the first fundamental form $d\sigma^2\left(:=df\cdot df\right)$ of $f$. The Kossowski pseudo-connection $\Gamma$ is given by
\begin{equation*}
\Gamma\left(\mathbf{X},\mathbf{Y},\mathbf{Z}\right)=\left(D_{df\left(\mathbf{X}\right)}df\left(\mathbf{Y}\right)\right)\cdot df\left(\mathbf{Z}\right),\label{2.14}
\end{equation*}
where $D:\mathfrak{X}\left(\mathbb{R}^3\right)\times\mathfrak{X}\left(\mathbb{R}^3\right)\to\mathfrak{X}\left(\mathbb{R}^3\right)$ is a Levi-Civita connection on $\mathbb{R}^3$ and ``$\cdot$'' is the Euclidean inner product of $\mathbb{R}^3$. If $\mathbf{Z}_p\in\mathcal{N}_p$, that is, $df_p\left(\mathbf{Z}_p\right)=\mathbf{0}$, then $\Gamma\left(\mathbf{X},\mathbf{Y},\mathbf{Z}\right)\left(p\right)=0$, so $d\sigma^2$ is admissible. Now, taking an adjusted local coordinate system $\left(u,v\right)$ at $p$, $f_u\left(0,0\right),f_{uv}\left(0,0\right),f_{vv}\left(0,0\right)$ are linearly independent by $f_v\left(0,0\right)=\mathbf{0}$ and Whitney's criterion of cross cap singularities (see \cite{6}). Hence,
\begin{eqnarray}
\begin{pmatrix}f_u\\f_{uv}\\f_{vv}\end{pmatrix}\left(f_u,f_{uv},f_{vv}\right)=\begin{pmatrix}E&F_u&F_v\\F_u&\frac{G_{uu}}{2}&\frac{G_{uv}}{2}\\F_v&\frac{G_{uv}}{2}&\frac{G_{vv}}{2}\end{pmatrix}\label{2.15}
\end{eqnarray}
is a regular matrix at $p$, where $E:=f_u\cdot f_u,\ F:=f_u\cdot f_v,\ G:=f_v\cdot f_v$. Thus, by the non-zero determinant of $(\ref{2.15})$ and $(\ref{2.9})$, we see that $p$ is an intrinsic cross cap singularity. Furthermore, the canonical coordinate system at a cross cap singularity is a West-type coordinate system of the second order, and constants $\alpha_{02},\alpha_{11},\alpha_{20}$ of $(\ref{2.11})$-$(\ref{2.13})$ coincides with the coefficients $a_{02},a_{11},a_{20}$ of $(\ref{1.2})$, respectively.
\end{example}

Let $\left(U;u,v\right)$ be an oriented West-type coordinate system of the second order at an intrinsic cross cap singularity $p\in M$. If we set $u\left(r,\theta\right):=r\cos\theta,v\left(r,\theta\right):=r\sin\theta$, then the Gaussian curvature $K$ satisfies
\begin{equation*}
K\left(r,\theta\right)=\frac{\alpha_{02}\left(\alpha_{20}\cos^2\theta-\alpha_{02}\sin^2\theta\right)}{r^2\left(\cos^2\theta+\left(\alpha_{11}\cos\theta+\alpha_{02}\sin\theta\right)^2\right)^2}+\frac{1}{O\left(r\right)},\label{2.16}
\end{equation*}
where $K\left(r,\theta\right):=K\left(u\left(r,\theta\right),v\left(r,\theta\right)\right)$. Then $KdA$ gives a smooth $2$-form with respect to the polar coordinate system $\left(r,\theta\right)$, where $dA:=\sqrt{EG-F^2}du\wedge dv$. In particular, the integral $\int_UKdA$ is well defined.

\section{Boundedness of the geodesic curvature at an intrinsic cross cap singularity}\label{sec3}

In this section, we consider the boundedness of the geodesic curvature measure of a regular curve passing through an intrinsic cross cap singularity. The following assertion holds.

\begin{theorem}
\label{thm3.1}

{\it Let $M$ be a $2$-manifold and $\gamma:\left[0,\varepsilon\right)\to M$ a smooth regular curve starting from an intrinsic cross cap singularity $p\in M$ of an admissible metric $d\sigma^2$. Then the geodesic curvature measure $\kappa_gds$ is a continuous $1$-form on $\left[0,\varepsilon\right)$, where $ds$ is the arc-length measure with respect to $d\sigma^2$.}
\end{theorem}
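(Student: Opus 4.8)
The plan is to fix an oriented West-type coordinate system of the second order $(U;u,v)$ centered at $p$ and, after shrinking $\varepsilon$, to assume $\gamma([0,\varepsilon))\subset U$ with $\gamma$ an embedding, so that $\gamma(t)\neq p$ for $t\in(0,\varepsilon)$ and the metric is genuinely Riemannian there. Write $\gamma(t)=(u(t),v(t))$ with $\gamma(0)=\mathbf 0$, and set $Q:=E(u')^2+2Fu'v'+G(v')^2$ and $\lambda:=EG-F^2$, where $'=d/dt$. The first step is to record the classical expression for the geodesic curvature measure on $U\setminus\{p\}$: $\kappa_g\,ds=\frac{\sqrt\lambda}{Q}\bigl((u''v'-u'v'')+\Psi\bigr)\,dt$, where $\Psi$ is the cubic form in $u',v'$ whose coefficients are the Christoffel symbols $\Gamma^k_{ij}$ of the second kind. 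Converting these via $\Gamma^k_{ij}=g^{kl}\Gamma_{ijl}$, $g^{kl}=\lambda^{-1}\cdot(\text{smooth})$, gives the representation (\ref{3.5}), $\kappa_g\,ds=\frac1Q\bigl(\sqrt\lambda\,(u''v'-u'v'')+\tfrac1{\sqrt\lambda}S\bigr)\,dt$, with $S$ a fixed polynomial in $E,F,G$, their first partial derivatives, and $u',v'$; structurally $S=G\cdot(\cdots)+F\cdot(\cdots)-Eu'\bigl(\Gamma_{112}(u')^2+2\Gamma_{122}u'v'+\Gamma_{222}(v')^2\bigr)$, so $S\circ\gamma$ is smooth in $t$.

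Next I would extract orders from (\ref{2.11})--(\ref{2.13}): $E=1+O(r^2)$, $F=O(r^2)$, $G=O(r^2)$; the quadratic part of $\lambda$ is the positive-definite form $\Lambda_2(u,v)=(1+\alpha_{11}^2)u^2+2\alpha_{02}\alpha_{11}uv+\alpha_{02}^2v^2$ (the quadratic part of $G$); $\Gamma_{112}=\alpha_{20}(\alpha_{11}u+\alpha_{02}v)+O(r^2)$, $\Gamma_{122}=(1+\alpha_{11}^2)u+\alpha_{02}\alpha_{11}v+O(r^2)$, $\Gamma_{222}=\alpha_{02}\alpha_{11}u+\alpha_{02}^2v+O(r^2)$; and $F$, $G$ and every $\Gamma_{ijl}$ vanish at $p$. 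Since $p$ is a nondegenerate minimum of $\lambda$ and $\gamma'(0)\neq\mathbf 0$, the smooth function $\lambda(\gamma(t))$ vanishes to exact order $2$ at $t=0$, hence $\sqrt{\lambda(\gamma(t))}=t\,h(t)$ with $h$ smooth on $[0,\varepsilon)$ and $h(0)=\sqrt{\Lambda_2(\gamma'(0))}>0$. Also $S(\gamma(0))=0$ (immediate from $F(p)=G(p)=0$ and $\Gamma_{ijl}(p)=0$), so $S(\gamma(t))=t\,\widehat S(t)$ with $\widehat S$ smooth; therefore, setting $B(t):=\sqrt\lambda\,(u''v'-u'v'')+S/\sqrt\lambda$ along $\gamma$, we get $B(t)=t\,h(t)(u''v'-u'v'')+\widehat S(t)/h(t)$, which is smooth on $[0,\varepsilon)$, and $\kappa_g\,ds=(B(t)/Q(t))\,dt$ with $B$ and $Q$ smooth.

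It then remains to analyze $t\to 0^+$, splitting according to whether $\gamma'(0)$ is a null vector. If $u'(0)\neq 0$, then $Q(0)=E(\mathbf 0)u'(0)^2=u'(0)^2>0$, so $B/Q$ is smooth near $t=0$ and $\kappa_g\,ds$ extends continuously (in fact smoothly). If $u'(0)=0$, then $v'(0)\neq 0$ by regularity, and a direct expansion along $\gamma$ gives $Q(t)=\bigl(u''(0)^2+\alpha_{02}^2v'(0)^4\bigr)t^2+O(t^3)$ with positive leading coefficient, so $Q(t)=t^2q(t)$ with $q$ smooth and $q(0)>0$; the crux is then the cancellation $B(t)=O(t^2)$. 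Here $S(\gamma(t))=O(t^2)$ in this case (its $G$- and $F$-summands are $O(t^3)$, the $Eu'$-summand $O(t^2)$), so $\widehat S(0)=0$, whence $B(0)=0$ and $B(t)=t\,B_1(t)$; the substance is $B_1(0)=0$, i.e.\ that the $t$-linear parts of $\sqrt\lambda\,(u''v'-u'v'')$ and of $S(\gamma(t))/\sqrt{\lambda(\gamma(t))}$ cancel. Indeed, along $\gamma$ one computes $\Gamma_{112}(u')^2+2\Gamma_{122}u'v'+\Gamma_{222}(v')^2=\alpha_{02}^2v'(0)^3\,t+O(t^2)$ and $h(0)=\alpha_{02}|v'(0)|$, which makes the two linear coefficients $\alpha_{02}|v'(0)|\,u''(0)v'(0)$ and $-\alpha_{02}|v'(0)|\,u''(0)v'(0)$. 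Hence $B(t)=t^2B_2(t)$ with $B_2$ smooth, and $\kappa_g\,ds=(B_2(t)/q(t))\,dt$ is continuous---in fact smooth---on $[0,\varepsilon)$. The principal obstacle is precisely this cancellation in the null-tangent case; the remaining steps are routine bookkeeping with the West-type expansion. The same representation $\kappa_g=B/Q^{3/2}$ also makes transparent when $\kappa_g$ itself stays bounded at $p$ (exactly when $B(t)=O(t^3)$), which is the content of Remark~\ref{rem3.2}.
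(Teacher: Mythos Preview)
Your argument is correct and follows the same overall strategy as the paper: pass to a West-type coordinate system of the second order, use the representation \eqref{3.5} to write $\kappa_g\,ds=(B/Q)\,dt$, and split into the two cases $\dot\gamma(0)\notin\mathcal N_p$ and $\dot\gamma(0)\in\mathcal N_p$.

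The difference lies in the execution of the null-tangent case. The paper fixes the explicit parametrization $v(t)=t$, $u(t)=\tfrac12\ddot u(0)t^2+\cdots$, substitutes \eqref{2.11}--\eqref{2.13} into \eqref{3.5}, and computes the full Laurent expansion \eqref{3.11} of $\kappa_g$ by brute force; the vanishing of the $t^{-1}$ term in $\kappa_g\,ds$ then drops out of the explicit formula. You instead isolate the mechanism: $\sqrt{\lambda\circ\gamma}=t\,h(t)$ with $h(0)=\alpha_{02}|v'(0)|>0$, the decomposition $S=G\cdot(\cdots)+F\cdot(\cdots)\pm Eu'\bigl(\Gamma_{112}(u')^2+2\Gamma_{122}u'v'+\Gamma_{222}(v')^2\bigr)$, and the observation that only the $Eu'$-summand contributes at order $t^2$, with leading term $\pm u''(0)\alpha_{02}^2v'(0)^3t^2$ exactly cancelling the linear part of $\sqrt\lambda\,(u'v''-u''v')$. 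This is a cleaner explanation of \emph{why} the cancellation happens, and it yields the slightly stronger conclusion that $\kappa_g\,ds$ is smooth on $[0,\varepsilon)$. What you lose relative to the paper is the explicit closed-form leading coefficients of $\kappa_g$ and $\kappa_g\,ds$ (the displayed formulas surrounding \eqref{3.11}), which are what feed into Remark~\ref{rem3.2}; your criterion ``$B(t)=O(t^3)$'' is equivalent but not spelled out in terms of $\alpha_{ij}$ and the jets of $u$. A minor cosmetic point: your sign in the Wronskian term reads $(u''v'-u'v'')$ whereas the paper's \eqref{3.5} has $(\dot u\ddot v-\dot v\ddot u)$; the cancellation is insensitive to this, but you may want to align conventions.
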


\begin{proof}

We take a West-type coordinate system of the second order $\left(U;u,v\right)$ centered at $p$ and write the curve $\gamma=\gamma\left(t\right)$ as $\gamma\left(t\right)=\left(u\left(t\right),v\left(t\right)\right)^T$.

First, we express the geodesic curvature $\kappa_g\left(t\right)$ of $\gamma\left(t\right)\ \left(t>0\right)$ using the coefficients $E:=E\left(\gamma\left(t\right)\right),F:=F\left(\gamma\left(t\right)\right),G:=G\left(\gamma\left(t\right)\right)$ of $\left(d\sigma^2\right)_{\gamma\left(t\right)}$. The tangent vector field $\dot{\gamma}\left(t\right)$ of $\gamma\left(t\right)$, the unit normal vector field $\mathbf{n}\left(t\right)$ of $\gamma\left(t\right)$, and the covariant derivative $\nabla_{\dot{\gamma}\left(t\right)}\dot{\gamma}\left(t\right)$ of $\dot{\gamma}\left(t\right)$ can each be written as
\begin{eqnarray}
\dot{\gamma}\left(t\right)&=&\dot{u}\left(t\right)\left(\frac{\partial}{\partial u}\right)_{\gamma\left(t\right)}+\dot{v}\left(t\right)\left(\frac{\partial}{\partial v}\right)_{\gamma\left(t\right)},\label{3.1}\\
\mathbf{n}\left(t\right)&=&\frac{\mathrm{sgn}\left(t\right)}{\sqrt{EG-F^2}\sqrt{\dot{u}\left(t\right)^2E+2\dot{u}\left(t\right)\dot{v}\left(t\right)F+\dot{v}\left(t\right)^2G}}\nonumber\\
&&\times\left\{-\left(\dot{u}\left(t\right)F+\dot{v}\left(t\right)G\right)\left(\frac{\partial}{\partial u}\right)_{\gamma\left(t\right)}\right.\nonumber\\
&&\left.+\left(\dot{u}\left(t\right)E+\dot{v}\left(t\right)F\right)\left(\frac{\partial}{\partial v}\right)_{\gamma\left(t\right)}\right\},\label{3.2}\\
\nabla_{\dot{\gamma}\left(t\right)}\dot{\gamma}\left(t\right)&=&\left\{\ddot{u}\left(t\right)+\frac{\dot{u}\left(t\right)^2\left(GE_u-2FF_u+FE_v\right)}{2\left(EG-F^2\right)}\right.\nonumber\\
&&+\frac{\dot{u}\left(t\right)\dot{v}\left(t\right)\left(GE_v-FG_u\right)}{EG-F^2}\nonumber\\
&&\left.+\frac{\dot{v}\left(t\right)^2\left(2GF_v-GG_u-FG_v\right)}{2\left(EG-F^2\right)}\right\}\left(\frac{\partial}{\partial u}\right)_{\gamma\left(t\right)}\nonumber\\
&&+\left\{\ddot{v}\left(t\right)+\frac{\dot{u}\left(t\right)^2\left(2EF_u-EE_v-FE_u\right)}{2\left(EG-F^2\right)}\right.\nonumber\\
&&+\frac{\dot{u}\left(t\right)\dot{v}\left(t\right)\left(EG_u-FE_v\right)}{EG-F^2}\nonumber\\
&&\left.+\frac{\dot{v}\left(t\right)^2\left(EG_v-2FF_v+FG_u\right)}{2\left(EG-F^2\right)}\right\}\left(\frac{\partial}{\partial v}\right)_{\gamma\left(t\right)},\label{3.3}
\end{eqnarray}
where $\nabla$ is the Levi-Civita connection on $M$. Furthermore, to obtain the expression $(\ref{3.3})$, we used Christoffel's symbol $\Gamma_{ij}^k\ \left(i,j,k=1,2\right)$ given by (see \cite{7})
\begin{eqnarray*}
&&\Gamma_{11}^1:=\frac{GE_u-2FF_u+FE_v}{2\left(EG-F^2\right)},\ \ \ \Gamma_{11}^2:=\frac{2EF_u-EE_v-FE_u}{2\left(EG-F^2\right)},\nonumber\\
&&\Gamma_{12}^1=\Gamma_{21}^1:=\frac{GE_v-FG_u}{2\left(EG-F^2\right)},\ \ \ \Gamma_{12}^2=\Gamma_{21}^2:=\frac{EG_u-FE_v}{2\left(EG-F^2\right)},\nonumber\\
&&\Gamma_{22}^1:=\frac{2GF_v-GG_u-FG_v}{2\left(EG-F^2\right)},\ \ \ \Gamma_{22}^2:=\frac{EG_v-2FF_v+FG_u}{2\left(EG-F^2\right)}.\label{3.4}
\end{eqnarray*}
Then, by the equations $(\ref{3.1})$-$(\ref{3.3})$, the geodesic curvature $\kappa_g\left(t\right)$ of $\gamma\left(t\right)\ \left(t>0\right)$ is given by
\begin{eqnarray}
\kappa_g\left(t\right)&:=&\frac{\left\langle\nabla_{\dot{\gamma}\left(t\right)}\dot{\gamma}\left(t\right),\mathbf{n}\left(t\right)\right\rangle}{\left|\dot{\gamma}\left(t\right)\right|^2}\nonumber\\
&=&\mathrm{sgn}\left(t\right)\left\{\frac{\left(\dot{u}\left(t\right)\ddot{v}\left(t\right)-\dot{v}\left(t\right)\ddot{u}\left(t\right)\right)\sqrt{EG-F^2}}{\left(\dot{u}\left(t\right)^2E+2\dot{u}\left(t\right)\dot{v}\left(t\right)F+\dot{v}\left(t\right)^2G\right)^{\frac{3}{2}}}\right.\nonumber\\
&&+\frac{\dot{u}\left(t\right)^3\left(2EF_u-EE_v-FE_u\right)}{2\sqrt{EG-F^2}\left(\dot{u}\left(t\right)^2E+2\dot{u}\left(t\right)\dot{v}\left(t\right)F+\dot{v}\left(t\right)^2G\right)^{\frac{3}{2}}}\nonumber\\
&&-\frac{\dot{v}\left(t\right)^3\left(2GF_v-GG_u-FG_v\right)}{2\sqrt{EG-F^2}\left(\dot{u}\left(t\right)^2E+2\dot{u}\left(t\right)\dot{v}\left(t\right)F+\dot{v}\left(t\right)^2G\right)^{\frac{3}{2}}}\nonumber\\
&&+\frac{\dot{u}\left(t\right)^2\dot{v}\left(t\right)\left(2EG_u-3FE_v-GE_u+2FF_u\right)}{2\sqrt{EG-F^2}\left(\dot{u}\left(t\right)^2E+2\dot{u}\left(t\right)\dot{v}\left(t\right)F+\dot{v}\left(t\right)^2G\right)^{\frac{3}{2}}}\nonumber\\
&&\left.-\frac{\dot{u}\left(t\right)\dot{v}\left(t\right)^2\left(2GE_v-3FG_u-EG_v+2FF_v\right)}{2\sqrt{EG-F^2}\left(\dot{u}\left(t\right)^2E+2\dot{u}\left(t\right)\dot{v}\left(t\right)F+\dot{v}\left(t\right)^2G\right)^{\frac{3}{2}}}\right\},\label{3.5}
\end{eqnarray}
where $\left|\xi\right|:=\sqrt{\left\langle\xi,\xi\right\rangle}$. Furthermore, the arc length measure $ds:=\left|\dot{\gamma}\left(t\right)\right|dt$ is expressed as
\begin{eqnarray}
ds=\sqrt{\dot{u}\left(t\right)^2E+2\dot{u}\left(t\right)\dot{v}\left(t\right)F+\dot{v}\left(t\right)^2G}dt.\label{3.6}
\end{eqnarray}

Now, we examine the boundedness of the geodesic curvature measure $\kappa_gds$ at $p$ using formulas $(\ref{3.5})$ and $(\ref{3.6})$. First, we consider the case where the curve $\gamma\left(t\right)$ is transversal to the null space $\mathcal{N}_p$ at $p$. We parameterize $\gamma\left(t\right)$ as
\begin{equation}
u\left(t\right)=t,\ \ \ v\left(t\right)=\sum_{i=1}^3\frac{v^{\left(i\right)}\left(0\right)}{i!}t^i+O\left(t\right)^4.\label{3.7}
\end{equation}
Computing $E,F,G$ and their partial derivatives with respect to $(\ref{3.7})$ and substituting these expressions into $(\ref{3.5})$, we obtain
\begin{eqnarray*}
\kappa_g\left(t\right)=\frac{2\dot{v}\left(0\right)+\left(\alpha_{11}+\alpha_{02}\dot{v}\left(0\right)\right)\left(\alpha_{20}+\dot{v}\left(0\right)\left(2\alpha_{11}+\alpha_{02}\dot{v}\left(0\right)\right)\right)}{\sqrt{1+\left(\alpha_{11}+\alpha_{02}\dot{v}\left(0\right)\right)^2}}+O\left(t\right).\label{3.8}
\end{eqnarray*}
Thus, we see that $\kappa_g$ is a continuous function on $\left[0,\varepsilon\right)$. In particular, $\kappa_gds$ is a continuous $1$-form on $\left[0,\varepsilon\right)$:
\begin{eqnarray*}
\kappa_g\left(t\right)ds=\left\{\frac{2\dot{v}\left(0\right)+\left(\alpha_{11}+\alpha_{02}\dot{v}\left(0\right)\right)\left(\alpha_{20}+\dot{v}\left(0\right)\left(2\alpha_{11}+\alpha_{02}\dot{v}\left(0\right)\right)\right)}{\sqrt{1+\left(\alpha_{11}+\alpha_{02}\dot{v}\left(0\right)\right)^2}}+O\left(t\right)\right\}dt.\label{3.9}
\end{eqnarray*}

Next, we consider the case where the curve $\gamma\left(t\right)$ is tangent to the null space $\mathcal{N}_p$ at $p$. We parameterize $\gamma\left(t\right)$ as
\begin{equation}
u\left(t\right)=\sum_{i=2}^4\frac{u^{\left(i\right)}\left(0\right)}{i!}t^i+O\left(t\right)^5,\ \ \ v\left(t\right)=t.\label{3.10}
\end{equation}
Computing $E,F,G$ and their partial derivatives with respect to $(\ref{3.10})$ and substituting these expressions into $(\ref{3.5})$, we obtain
\begin{eqnarray}
\kappa_g\left(t\right)&=&\frac{8\left(\alpha _{02}\right)^2 \alpha _{11} \ddot{u}\left(0\right)^2-2 u^{\left(3\right)}\left(0\right) \left(\alpha _{02}\right)^3+u^{\left(3\right)}\left(0\right) \alpha _{02} \ddot{u}\left(0\right)^2-\alpha _{11} \ddot{u}\left(0\right)^4}{2 t \left(\left(\alpha _{02}\right)^2+\ddot{u}\left(0\right)^2\right)^{\frac{5}{2}}}\nonumber\\
&&+\frac{1}{24\alpha _{02} \left(\left(\alpha _{02}\right)^2+\ddot{u}\left(0\right)^2\right)^{\frac{7}{2}}}\left\{\ddot{u}\left(0\right) \left(-12 \left(\alpha _{02}\right)^6\right.\right.\nonumber\\
&&+18\left( \alpha _{02}\right)^4 \ddot{u}\left(0\right)^2 \left(2 \alpha _{02} \alpha _{20}-15 \left(\alpha _{11}\right)^2+3\right)\nonumber\\
&&+9 \left(\alpha _{02}\right)^2 \ddot{u}\left(0\right)^4 \left(4 \alpha _{02} \alpha _{20}+15\left( \alpha _{11}\right)^2+7\right)\nonumber\\
&&+2 u^{\left(3\right)}\left(0\right) \alpha _{02} \alpha _{11} \left(70 \left(\alpha _{02}\right)^4-64 \left(\alpha _{02}\right)^2 \ddot{u}\left(0\right)^2+\ddot{u}\left(0\right)^4\right)\nonumber\\
&&\left.+45 u^{\left(3\right)}\left(0\right)^2\left( \alpha _{02}\right)^4-3 \ddot{u}\left(0\right)^6\right)\nonumber\\
&&\left.-12 u^{\left(4\right)}\left(0\right) \left(\alpha _{02}\right)^4 \left(\left(\alpha _{02}\right)^2+\ddot{u}\left(0\right)^2\right)\right\}+O\left(t\right)^3.\label{3.11}
\end{eqnarray}
Then the geodesic curvature measure $\kappa_gds$ is given by
\begin{eqnarray*}
\kappa_g\left(t\right)ds=\left\{\frac{8\left( \alpha _{02}\right)^2 \alpha _{11}\ddot{u}\left(0\right)^2-2 u^{\left(3\right)}\left(0\right)\left( \alpha _{02}\right)^3+u^{\left(3\right)}\left(0\right) \alpha _{02}\ddot{u}\left(0\right)^2-\alpha _{11}\ddot{u}\left(0\right)^4}{2  \left(\left(\alpha _{02}\right)^2+\ddot{u}\left(0\right)^2\right)^2}+O\left(t\right)\right\}dt.\label{3.12}
\end{eqnarray*}
Hence, $\kappa_gds$ is a continuous $1$-form on $\left[0,\varepsilon\right)$.\qed
\end{proof}

\begin{remark}
\label{rem3.2}

In general, if a amooth regular curve $\gamma\left(t\right)\ \left(t\in\left[0,\varepsilon\right)\right)$ starting from an intrinsic cross cap singularity $p\in M$ is tangent to a null space $\mathcal{N}_p$ at $p$ and
\begin{equation*}
8\left(\alpha _{02}\right)^2 \alpha _{11} \ddot{u}\left(0\right)^2-2 u^{\left(3\right)}\left(0\right) \left(\alpha _{02}\right)^3+u^{\left(3\right)}\left(0\right) \alpha _{02} \ddot{u}\left(0\right)^2-\alpha _{11} \ddot{u}\left(0\right)^4=0,
\end{equation*}
the geodesic curvature $\kappa_g\left(t\right)$ is bounded at $p$ by $(\ref{3.11})$.
\end{remark}

\begin{remark}
\label{rem3.3}

Setting $u\left(t\right)=t,\ v\left(t\right)=0$ in $(\ref{3.7})$, the formula for the geodesic curvature measure $\kappa_gds$ with respect to polar coordinates is derived in the proof of \cite[Theorem $5.1$]{2}.
\end{remark}

\begin{example}
\label{ex3.4}

We consider a standard cross cap $(\ref{1.1})$. Then, by Example $\ref{ex2.1}$, the coordinate system $\left(u,v\right)$ is a West-type coordinate system of the second order, and the null space at the singular point $\left(0,0\right)$ is the $v$-axis. Now, we consider the following four types of curves in the $uv$-plane:
\begin{eqnarray*}
&&\gamma_1\left(t\right):=\left(t,t\right)^T,\ \ \ \gamma_2\left(t\right):=\left(\frac{t^2}{2},t\right)^T,\nonumber\\
&&\gamma_3\left(t\right):=\left(\frac{t^3}{6},t\right)^T,\ \ \ \gamma_4\left(t\right):=\left(\frac{t^4}{24},t\right)^T.\label{3.13}
\end{eqnarray*}
The curve $\gamma_1$ is transversal to the $v$-axis at $\left(0,0\right)$ and $\gamma_i\ \left(i=2,3,4\right)$ are tangent to the $v$-axis at $\left(0,0\right)$. The images of these curves $\widehat{\gamma}_i:=f_0\circ\gamma_i\ \left(i=1,2,3,4\right)$ are expressed as (see Figure $\ref{fig3.1}$)
\begin{eqnarray*}
&&\widehat{\gamma}_1\left(t\right)=\left(t,t^2,t^2\right)^T,\ \ \ \widehat{\gamma}_2\left(t\right)=\left(\frac{t^2}{2},\frac{t^3}{2},t^2\right)^T,\nonumber\\
&&\widehat{\gamma}_3\left(t\right)=\left(\frac{t^3}{6},\frac{t^4}{6},t^2\right)^T,\ \ \ \widehat{\gamma}_3\left(t\right)=\left(\frac{t^4}{24},\frac{t^5}{24},t^2\right)^T.\label{3.14}
\end{eqnarray*}
The geodesic curvatures $\kappa_g^i$ of $\widehat{\gamma}_i\ \left(i=1,2,3,4\right)$ are calculated as
\begin{eqnarray*}
\kappa_g^1\left(t\right)&=&\frac{6}{\sqrt{4t^2+5}\left(8t^2+1\right)^{\frac{3}{2}}}\to\frac{6}{\sqrt{5}}\ \ \ \left(t\to0\right),\nonumber\\
\kappa_g^2\left(t\right)&=&-\mathrm{sgn}\left(t\right)\frac{84}{\left(9 t^2+20\right)^{\frac{3}{2}} \sqrt{17 t^2+16}}\to\mp\frac{21}{40 \sqrt{5}}\ \ \ \left(t\to\pm0\right),\nonumber\\
\kappa_g^3\left(t\right)&=&\mathrm{sgn}\left(t\right)\frac{72 \left(t^4-96 t^2-36\right)}{t \sqrt{t^4+144 t^2+144} \left(16 t^4+9 t^2+144\right)^{\frac{3}{2}}},\nonumber\\
\kappa_g^4\left(t\right)&=&\mathrm{sgn}\left(t\right)\frac{96 \left(5 t^6-8640 t^2-4608\right)}{\sqrt{t^6+2304 t^2+2304} \left(25 t^6+16 t^4+2304\right)^{\frac{3}{2}}}\nonumber\\
&&\to\mp\frac{1}{12}\ \ \ \left(t\to\pm0\right).\label{3.15}
\end{eqnarray*}
We note that $\kappa_g^3\left(t\right)$ diverges as $t$ tends to zero. Furthermore, the geodesic curvature measures $\kappa_g^ids_i\ \left(ds^i:=\sqrt{\widehat{\gamma}_i\left(t\right)\cdot\widehat{\gamma}_i\left(t\right)}dt\right)$ of the curves $\widehat{\gamma}_i$ are computed as
\begin{eqnarray*}
\kappa_g^1\left(t\right)ds_1&=&\frac{6}{\sqrt{4 t^2+5} \left(8 t^2+1\right)}dt\to\frac{6}{\sqrt{5}}dt\ \ \ \left(t\to0\right),\nonumber\\
\kappa_g^2\left(t\right)ds_2&=&-\frac{42 t }{\left(9 t^2+20\right) \sqrt{17 t^2+16}}dt\to0dt\ \ \ \left(t\to0\right),\nonumber\\
\kappa_g^3\left(t\right)ds_3&=&\frac{12 \left(t^4-96 t^2-36\right)  }{\sqrt{t^4+144 t^2+144} \left(16 t^4+9 t^2+144\right)}dt\nonumber\\
&&\to-\frac{1}{4}dt\ \ \ \left(t\to0\right),\nonumber\\
\kappa_g^4\left(t\right)ds_4&=&\frac{4 t \left(5 t^6-8640 t^2-4608\right)  }{\sqrt{t^6+2304 t^2+2304} \left(25 t^6+16 t^4+2304\right)}dt\nonumber\\
&&\to0dt\ \ \ \left(t\to0\right).\label{3.16}
\end{eqnarray*}
Hence, the $1$-forms $\kappa_g^ids_i$ are continuous.
\begin{figure}[ht]
\centering
\includegraphics[width=2.5cm]{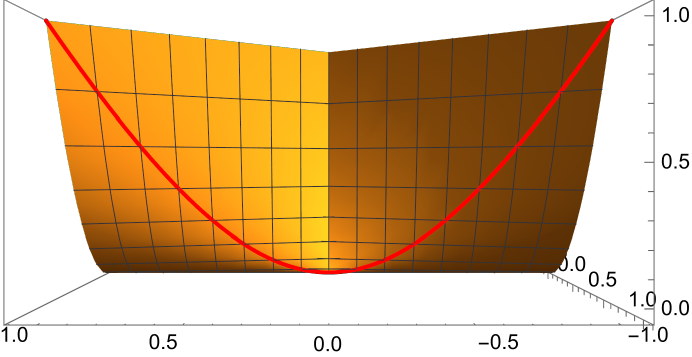}\ \ \ \includegraphics[width=2.5cm]{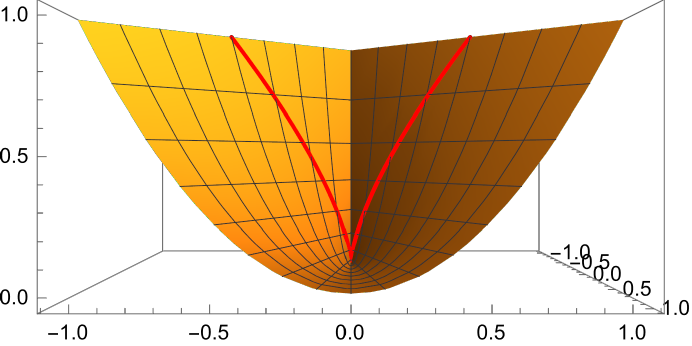}\ \ \ \includegraphics[width=2.5cm]{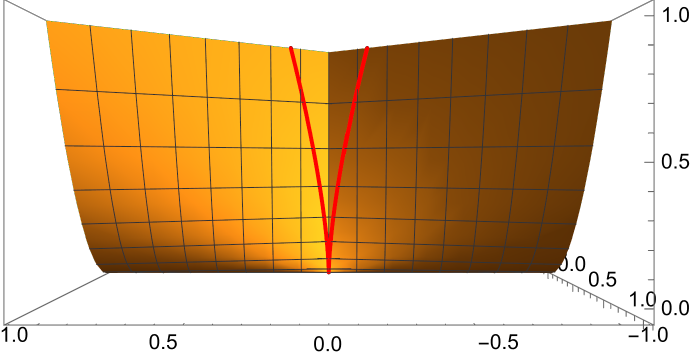}\ \ \ \includegraphics[width=2.5cm]{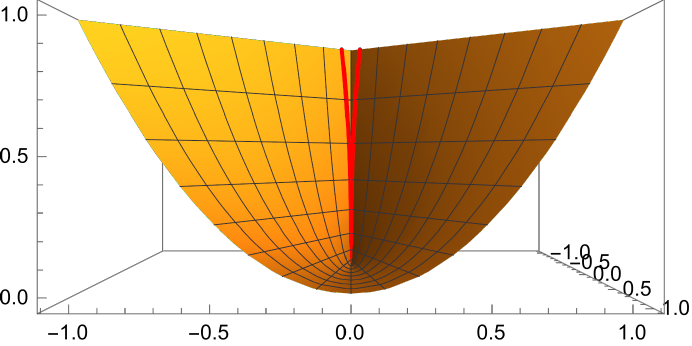}
\caption{The red lines drawn on the cross caps represent $\widehat{\gamma}_i \left(i=1,2,3,4\right)$ from left to right.}
\label{fig3.1}
\end{figure}
\end{example}

\section{The Gauss-Bonnet type formula for Whitney metrics on surfaces with boundary}\label{sec4}

In this section, we prove a Gauss-Bonnet type formula for Whitney metrics on surfaces with boundary, which is a generalization of \cite[Theorem $5.1$]{2} to the case of surfaces with boundary.

\begin{theorem}
\label{thm4.1}

{\it Let $M$ be a compact oriented $2$-manifold with boundary $\partial M$ and $d\sigma^2$ a Whitney metric on $M$. Then
\begin{equation*}
\int_MKdA+\int_{\partial M}\kappa_gds=2\pi\chi\left(M\right)\label{4.1}
\end{equation*}
holds, that is, intrinsic cross cap singularities do not affect the classical Gauss-Bonnet theorem $(\ref{1.3})$}
\end{theorem}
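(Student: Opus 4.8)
The plan is to reduce to the classical Gauss--Bonnet theorem by excising small neighbourhoods of the (finitely many, since isolated) intrinsic cross cap singularities and letting them shrink. Write $p_1,\dots,p_n$ for the singular points of $d\sigma^2$ in the interior of $M$ and $q_1,\dots,q_m$ for those on $\partial M$, and fix West-type coordinate systems of the second order centred at each of them. For small $\varepsilon>0$ set $B_\varepsilon(p_i)=\{r<\varepsilon\}$ (a coordinate disc) and $B_\varepsilon^+(q_k)=B_\varepsilon(q_k)\cap M$ (a ``half-disc'', whose boundary inside $M$ is a coordinate arc $C_\varepsilon^k$), and let $M_\varepsilon:=M\setminus\big(\bigcup_i B_\varepsilon(p_i)\cup\bigcup_k B_\varepsilon^+(q_k)\big)$. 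For $\varepsilon$ small this is a compact surface carrying a genuine Riemannian metric, all singular points having been removed, with piecewise smooth boundary having exactly $2m$ corners, namely the two points where each $C_\varepsilon^k$ meets $\partial M$. Removing an open interior disc lowers the Euler characteristic by one while removing a boundary half-disc bite leaves it unchanged, so $\chi(M_\varepsilon)=\chi(M)-n$. The classical Gauss--Bonnet theorem with corners applied to $M_\varepsilon$ gives
\[
\int_{M_\varepsilon}K\,dA+\int_{\partial M_\varepsilon}\kappa_g\,ds+\sum_{\text{corners}}\theta_j=2\pi\big(\chi(M)-n\big),
\]
where the $\theta_j$ are the exterior angles.

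Next I would let $\varepsilon\to0$ term by term. Since $K\,dA$ extends to a smooth $2$-form in the polar coordinates $(r,\theta)$ about each singular point (as recalled at the end of Section~\ref{sec2}), it is integrable on $M$ and $\int_{M_\varepsilon}K\,dA\to\int_M K\,dA$. By Theorem~\ref{thm3.1} the $1$-form $\kappa_g\,ds$ is continuous along $\partial M$, including at the $q_k$; hence $\int_{\partial M}\kappa_g\,ds$ is a convergent integral and the pieces of $\partial M$ removed near the $q_k$ contribute $o(1)$, so $\int_{\partial M\cap M_\varepsilon}\kappa_g\,ds\to\int_{\partial M}\kappa_g\,ds$. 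For each interior singular point the circle $\partial B_\varepsilon(p_i)$, oriented as part of $\partial M_\varepsilon$ (i.e.\ clockwise about $p_i$), satisfies $\int_{\partial B_\varepsilon(p_i)}\kappa_g\,ds\to -2\pi$; this is precisely the local computation in the proof of \cite[Theorem~5.1]{2}, and it can be recovered from formula $(\ref{3.5})$ specialised to the circle $r=\varepsilon$ (cf.\ Remark~\ref{rem3.3}).

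It remains to control, for each boundary singular point $q_k$, the quantity $\int_{C_\varepsilon^k}\kappa_g\,ds+\theta_{k,1}(\varepsilon)+\theta_{k,2}(\varepsilon)$ coming from the inner arc together with its two corners; I claim it tends to $0$. This is the boundary analogue of the ``$\to 2\pi$'' statement above: at a \emph{regular} boundary point a small half-disc bite contributes nothing (the turning $-\pi$ of the semicircular arc is cancelled by the two near-right-angle corners), and the point is that an intrinsic cross cap singularity does not alter this. I would prove it by the scheme of Theorem~\ref{thm3.1}: parametrise $C_\varepsilon^k$ and the relevant piece of $\partial M$ in the West-type coordinates centred at $q_k$, distinguish whether $\partial M$ is transversal or tangent to the null direction $\mathcal{N}_{q_k}$, insert the expansions $(\ref{2.11})$--$(\ref{2.13})$ into $(\ref{3.5})$, and estimate; alternatively one may replace each corner by a small smooth rounding and deduce the claim from the integrability of $K\,dA$ together with the continuity of $\kappa_g\,ds$ from Theorem~\ref{thm3.1}, comparing with the interior estimate. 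I expect this boundary analysis to be the main obstacle, the delicate case being $\partial M$ tangent to the null direction, where $\kappa_g$ itself may be unbounded (cf.\ Remark~\ref{rem3.2}) even though $\kappa_g\,ds$ stays continuous.

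Finally, combining the limits in the displayed identity: $\int_{M_\varepsilon}K\,dA\to\int_M K\,dA$, $\int_{\partial M\cap M_\varepsilon}\kappa_g\,ds\to\int_{\partial M}\kappa_g\,ds$, the interior circles contribute $-2\pi n$ in total, and the boundary arcs together with their corners contribute $0$; the terms $-2\pi n$ on the two sides cancel, leaving $\int_M K\,dA+\int_{\partial M}\kappa_g\,ds=2\pi\chi(M)$, which is the assertion. (One could instead try to remove the boundary singularities by a doubling or a perturbation argument reducing to \cite[Theorem~5.1]{2}, but preserving the Whitney condition and controlling the integrals as a singular point crosses $\partial M$ seems less transparent than the direct excision above.)
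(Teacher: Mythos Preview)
Your excision-and-limit scheme is the same strategy as the paper's, carried out in more detail. The paper's argument for a boundary singular point $p$ is extremely brief: it takes the coordinate half-disc $D(r)\subset M$, records the two corners $P_1,P_2$ where the circular arc meets $\partial M$, and simply asserts the local Gauss--Bonnet identity
\[
\int_{D(r)}K\,dA+\int_{S(r)}\kappa_g\,ds \;=\; 2\pi-\sum_{l=1,2}\bigl(\pi-\angle P_l\bigr),
\]
justified only by the remark that both integrals on the left make sense (the curvature integral by the polar-coordinate smoothness of $K\,dA$ recalled at the end of Section~\ref{sec2}, the boundary integral---which includes the arc of $\partial M$ through $p$---by Theorem~\ref{thm3.1}). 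The passage from this local identity to the global formula, and the proof of the identity itself, are left to the phrase ``by a proof similar to the classical Gauss--Bonnet theorem''.

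Your key claim $\int_{C_\varepsilon^k}\kappa_g\,ds+\theta_{k,1}+\theta_{k,2}\to 0$ is equivalent to that local identity: rearranging the paper's display and using that the exterior angle of $M_\varepsilon$ at $P_l$ equals the interior angle $\angle P_l$ of the half-disc, one obtains
\[
\int_{C_\varepsilon^k}\kappa_g\,ds+\theta_{k,1}+\theta_{k,2}
\;=\; \int_{D(\varepsilon)}K\,dA \;+\; \int_{\partial M\cap D(\varepsilon)}\kappa_g\,ds,
\]
whose right-hand side tends to $0$ by exactly the two integrability facts the paper invokes. So the ``main obstacle'' you isolate is precisely what the paper absorbs into that one sentence; your proposed verification via the polar computation of Remark~\ref{rem3.3} together with the expansions $(\ref{2.11})$--$(\ref{2.13})$ and formula $(\ref{3.5})$ is a legitimate and more explicit route to the same end. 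Note, incidentally, that the transversal/tangent dichotomy you anticipate is not needed for the corner terms themselves, since $P_1,P_2$ sit at distance $\varepsilon$ from the singular point and the angles there are measured in the nondegenerate metric; the dichotomy matters only for the $\partial M$-integral, which Theorem~\ref{thm3.1} already handles.
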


\begin{proof}

The formula for the case where a singular point $p\in M$ of $d\sigma^2$ is in the interior $M\backslash\partial M$ of $M$ can be proved exactly the same as in \cite[Theorem $5.1$]{2}. We prove the case where $p$ is on $\partial M$.

Since $p$ is an intrinsic cross cap singularity, we can take an oriented West-type coordinate system of the second order $\left(U;u,v\right)$ centered at $p$. Let $D\left(r\right)$ be a closed disc contained in $U$ with center $p$ and radius $r>0$, and $\left\{P_1,P_2\right\}$ the intersection of the boundary $S\left(r\right)$ of $D\left(r\right)$ with $\partial M$:
\begin{equation*}
D\left(r\right):=\left\{\left(u,v\right)\in U\mid u^2+v^2\leq r^2\right\},\ \left\{P_1,P_2\right\}=S\left(r\right)\cap\partial M.\label{4.2}
\end{equation*}
Then, by a proof similar to the classical Gauss-Bonnet theorem, we obtain
\begin{eqnarray}
\int_{D\left(r\right)}KdA+\int_{S\left(r\right)}\kappa_gds=2\pi-\sum_{l=1,2}\left(\pi-\angle{P_l}\right),\label{4.3}
\end{eqnarray}
where $\angle P_l$ is the interior angle at $P_l$. Here, we note that the integrals on the left-hand side of $(\ref{4.3})$ are well defined, respectively (see the explanation under Example $\ref{ex2.1}$ and Theorem $\ref{thm3.1}$).\qed
\end{proof}

\section*{Declarations}

\subsection*{Ethics approval and consent to participate}

Not applicable

\subsection*{Consent for publication}

Not applicable.

\subsection*{Availability of data and materials}

Not applicable.

\subsection*{Competing interests}

The authors declare that they have no competing interests.

\subsection*{Funding}

This work was supported by the Hokkaido University Ambitious Doctoral Fellowship (Information Science and AI).

\subsection*{Authors' contributions}

K.H. designed the study and worked on the manuscript.

\subsection*{Acknowledgements}

The author thanks Go-o Ishikawa for fruitful discussions and valuable comments.

\begin{itemize}
\item[]Kyoya Hashibori,\\Department of Mathematics,\\Graduate School of Science,\\Hokkaido University,\\Kita-10, Nishi-8, Kita-ku, Sapporo, Hokkaido, 060-0810, Japan\\e-mail: hashibori.kyoya.a7@elms.hokudai.ac.jp
\end{itemize}


\begin{thebibliography}{99}
\bibitem{2}M. Hasegawa, A. Honda, K. Naokawa, K. Saji, M. Umehara, K. Yamada, \textit{Intrinsic properties of surfaces with singularities}. Internat. J. Math. $26$ $(2015)$, no. $4$, $1540008$, $34$ pp.
\bibitem{3}M. Hasegawa, A. Honda, K. Naokawa, M. Umehara, K. Yamada, \textit{Intrinsic invariants of cross caps}. Selecta Math. (N.S.) $20$ $(2014)$, no. $3$, $769$–$785$.
\bibitem{4}N. H. Kuiper, \textit{Stable surfaces in euclidean three space}. Math. Scand. $36$ $(1975)$, $83$–$96$.
\bibitem{7}M. Umehara, K. Yamada, \textit{Differential geometry of curves and surfaces}. Translated from the second $(2015)$ Japanese edition by Wayne Rossman, World Scientific Publishing Co. Pte. Ltd., Hackensack, NJ, $2017$. $\mathrm{xii}+312$ pp.
\bibitem{1}J. West, \textit{The differential geometry of the cross-cap}, Ph. D. thesis, Liverpool Univ. $(1995)$.
\bibitem{6}H. Whitney, \textit{The singularities of a smooth $n$-manifold in $(2n-1)$-space}. Ann. of Math. $(2)$ $45$ $(1944)$, $247$–$293$.
\end{thebibliography}
\end{document}